 \newtheorem{theorem}{Theorem}[section]
 \newtheorem*{theorem*}{Theorem}
 \newtheorem*{lemma*}{Lemma}
 \newtheorem{proposition}[theorem]{Proposition}
 \newtheorem{fact}[theorem]{Fact}
 \newtheorem{fact*}{Fact}
 \newtheorem{lemma}[theorem]{Lemma}
\theoremstyle{definition}
 \newtheorem{definition}[theorem]{Definition}
 \newtheorem{remark}[theorem]{Remark}
 \newtheorem*{remark*}{Remark}
 \newtheorem{example}[theorem]{Example}
\numberwithin{equation}{section}
\newcommand{\R}{\boldsymbol{R}}
\newcommand{\rank}{\operatorname{rank}}
\renewcommand{\phi}{\varphi}
\newcommand{\inner}[2]{\left\langle{#1},{#2}\right\rangle}
\newcommand{\spann}[1]{\left\langle{#1}\right\rangle_{\R}}
\newcommand{\ep}{\varepsilon}
\newcommand{\pmt}[1]{{\begin{pmatrix} #1  \end{pmatrix}}}
\newcommand{\mycomment}[1]{}
\title{\Large {\bf Normal form of swallowtail and its applications}}
\author{Kentaro Saji}
\date{\today}
\begin{document}
\maketitle
\begin{abstract}
We construct a form of swallowtail singularity in $\R^3$
which uses coordinate transformation on the source and
isometry on the target.
As an application, we classify configurations of
asymptotic curves and characteristic curves
near swallowtail.
\end{abstract}

\renewcommand{\thefootnote}{\fnsymbol{footnote}}
\footnote[0]{2010 Mathematics Subject classification. 
Primary 53A05; Secondary 58K05, 57R45} 
\footnote[0]{Key Words and Phrases. swallowtails,
flat approximations, curves on surfaces, Darboux frame, 
developable surfaces, contour edges} 
\section{Introduction}
Wave fronts and frontals are surfaces in $3$-space, and
they may have singularities.
They always have normal directions even along singularities.
Recently, there appeared several articles concerning on 
differential geometry of wave fronts and frontals
\cite{hhnsuy,hhnuy,hnuy,IO,MS,MSUY,nuy,OTflat,SUY}.
Surfaces which have only
cuspidal edges and swallowtails as singularities are 
the generic wave fronts in the Euclidean $3$-space.
Fundamental differential geometric invariants of cuspidal edge
is defined in \cite{SUY}.
It is further investigated in \cite{MS,MSUY,ist},
where the normal form of cuspidal edge
plays a important role.
The normal form of a singular point is a parametrization
using by coordinate transformation on the source
and isometric transformation on the target\cite{bw,WE}.
For the purpose of differential geometric investigation of
singularities,
it is not only convenient, but also
indispensable to studying higher order invariants.
Higher order invariants of cuspidal edges are
studied in \cite{nuy}, and in \cite{nuy},
moduli of isometric deformations of 
cuspidal edge is determined.
In this paper, we give a normal form of swallowtail,
and study relationships to previous investigation
of swallowtail.
As an application, we study geometric
foliations near swallowtail.

The precise definition of the swallowtail is given as follows:
The unit cotangent bundle $T^*_1\R^{3}$ of  $\R^{3}$ has the
canonical contact structure and can be identified with the unit
tangent bundle $T_1\R^{3}$. Let $\alpha$ denote the canonical
contact form on it. A map $i:M\to T_1\R^{3}$ is said to be 
{\it
isotropic\/} if the pull-back $i^*\alpha$ vanishes
identically, where $M$ is a $2$-manifold. 
If $i$ is an immersion, then we call the image of $\pi\circ i$
the {\it wave front set\/} of $i$,
where $\pi:T_1\R^{3}\to\R^{3}$ is the canonical projection and we
denote it by\/ $W(i)$. Moreover, $i$ is called the 
{\it Legendrian
lift\/} of $W(i)$. 
With this framework, we define the notion of
fronts as follows: A map-germ $f:(\R^2,0) \to (\R^{3},0)$ is
called a {\it frontal\/} 
if there exists a unit vector field 
(called {\it unit normal of\/} $f$)
$\nu$ of $\R^{3}$ along $f$
such that
$L=(f,\nu):(\R^2,0)\to (T_1\R^{3},0)$ is
an isotropic map by an identification 
$T_1\R^3 = \R^3 \times S^2$, where $S^2$ is 
the unit sphere in $\R^3$ (cf. \cite{AGV}, see also \cite{krsuy}).
A frontal $f$ is a {\it front\/} if the above $L$ can be taken as
an immersion.
A point $q\in (\R^2, 0)$ is a singular point if $f$ is not an
immersion at $q$.
A map $f:M\to N$ 
between $2$-dimensional manifold $M$ and
$3$-dimensional manifold $N$ is called 
a frontal (respectively, a front)
if for any $p\in M$, the map-germ $f$ at $p$
is a frontal (respectively, a front).
A singular point $p$ of a map $f$ is called a 
{\it cuspidal edge\/}
if the map-germ $f$ at $p$ is $\mathcal{A}$-equivalent to
$(u,v)\mapsto(u,v^2,v^3)$ at $0$,
and a singular point $p$ is called a {\it swallowtail\/}
if the map-germ $f$ at $p$ is $\mathcal{A}$-equivalent to
$(u,v)\mapsto(u,4v^3+2uv,3v^4+uv^2)$ at $0$,
 (Two map-germs
$f_1,f_2:(\R^n,0)\to(\R^m,0)$ are $\mathcal{A}$-
{\it equivalent}\/ if there exist diffeomorphisms
$S:(\R^n,0)\to(\R^n,0)$ and $T:(\R^m,0)\to(\R^m,0)$ such
that $ f_2\circ S=T\circ f_1 $.) Therefore if the singular point $p$
of $f$ is a swallowtail, then $f$ at $p$ is a front.
\section{Singular points of $k$-th kind}

Let $f:(\R^2,0) \to (\R^{3},0)$ be a frontal and $\nu$ its
unit normal.
Let $\lambda$ be a function which is a non-zero functional multiplication
of the function
$$
\det(f_u,f_v,\nu)
$$
for some coordinate system $(u,v)$, and 
$(~)_u=\partial/\partial u$,
$(~)_v=\partial/\partial v$.
We call such function {\it singularity identifier}.
A singular point $p$ of $f$ is called {\it non-degenerate\/}
if $d\lambda(p)\ne0$.
Let $0$ be a non-degenerate singular point of $f$.
Then the set of singular points $S(f)$ is a regular curve,
we take a parameterization $\gamma(t)$ $(\gamma(0)=0)$ of it.
We set $\hat\gamma=f\circ\gamma$ and call $\hat\gamma$ 
{\it singular locus}.
One can show that
there exists a vector field $\eta$
such that if $p\in S(f)$, then
$$
\ker df_p=\spann{\eta_p}.
$$
We call $\eta$ the {\it null vector field}.
On $S(f)$, $\eta$ can be parameterized by
the parameter $t$ of $\gamma$.
We denote by $\eta(t)$ the null vector field along $\gamma$.
We set 
\begin{equation}\label{eq:criphi}
\phi(t)=\det\left(\dfrac{d\gamma}{dt}(t),\eta(t)\right).
\end{equation}
\begin{definition}
A non-degenerate singular point $0$ is 
the {\it first kind\/} if
$\phi(0)\ne0$.
A non-degenerate singular point $0$ is 
the {\it $k$-th kind\/} ($k\geq2$) if
$$
\dfrac{d\phi}{dt}(0)=\cdots
=
\dfrac{d^{k-2}\phi}{dt^{k-2}}(0)=0,\quad
\dfrac{d^{k-1}\phi}{dt^{k-1}}(0)\ne0.
$$
\end{definition}
The definition does not depend on the choice
of the parameterization of $\gamma$ and
choice of $\eta$.
We remark that if $f$ is a front,
then the singular point of the first kind
is the cuspidal edge, and
the singular point of the second kind
is the swallowtail \cite{krsuy}.
We can rephrase the definition of the $k$-th kind singularity
as follows.
Let $0$ be a non-degenerate singular point of $f$.
Then there exists a vector field $\tilde\eta$
such that if $p\in S(f)$ then
$
\ker df_{p}=\spann{\eta_p}.
$
We call $\tilde\eta$ the {\it extended null vector field}.

\begin{lemma}\label{lem:kthlambda}
Let\/ $0$ be a non-degenerate singular 
point\/ $0$ of a frontal\/
$f:(\R^2,0)\to(\R^3,0)$,
and let\/ $\lambda$ be a singularity identifier.
Then the followings are equivalent.
\begin{enumerate}
\item \label{item:kth1} $0$ is a singular point of the\/ $k$-th kind,
\item \label{item:kth2}
$
\eta\lambda=\cdots=\eta^{k-1}\lambda(0)=0,
\eta^{k}\lambda(0)\ne0,
$
where\/
$\eta$
is a null vector field, and\/
$\eta^i$ stands for the\/ $i$ times directional derivative by\/ $\eta$.
\end{enumerate}
\end{lemma}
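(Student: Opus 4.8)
Here is a proof plan.

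The plan is to reduce, via a coordinate change on the source, to the case $\lambda=\det(f_u,f_v,\nu)$ with the null direction straightened, and then to read everything off the singular curve. Since any two singularity identifiers differ by a nowhere-vanishing factor and $\lambda$ vanishes identically on $S(f)\ni 0$, the Leibniz rule (used repeatedly: $\eta^j(\mu\lambda)(0)=\mu(0)\,\eta^j\lambda(0)$ whenever $\eta^i\lambda(0)=0$ for $0\le i<j$) shows that the chain of conditions in \ref{item:kth2} is unaffected by replacing $\lambda$ with $\det(f_u,f_v,\nu)$; so I may assume $\lambda=\det(f_u,f_v,\nu)$. Applying the flow-box theorem to $\eta$ (a non-vanishing vector field near $0$), I then take coordinates $(u,v)$ centred at $0$ with $\eta=\partial_v$, so that $\eta^i\lambda=\partial_v^i\lambda$; since \ref{item:kth1} does not depend on any of these choices, establishing the equivalence in this normalisation proves the lemma.

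In these coordinates $f_v=df(\partial_v)$ vanishes along $S(f)=\{\lambda=0\}$, and for a regular parametrisation $\gamma(t)=(u(t),v(t))$ of $S(f)$ one has $\phi(t)=\det(\gamma'(t),\eta)=u'(t)$; thus \ref{item:kth1} says precisely that $u'$ vanishes to order exactly $k-1$ at $0$ (an order independent of the parametrisation). Because $T_0S(f)=\ker d\lambda(0)$ is spanned by $\gamma'(0)$, we have $\eta\lambda(0)=\lambda_v(0)=0\iff\partial_v\in T_0S(f)\iff u'(0)=\phi(0)=0$; this already disposes of the case $k=1$. When $k\ge 2$, both \ref{item:kth1} and \ref{item:kth2} force $\phi(0)=u'(0)=0$, so $v'(0)\ne0$ by regularity of $\gamma$, and I may reparametrise $S(f)$ by $v$: $S(f)=\{(u(v),v)\}$ with $u(0)=u'(0)=0$.

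Differentiating $f_v(u(v),v)\equiv 0$ gives $f_{vv}=-u'f_{uv}$ along $S(f)$, so the restriction of $\lambda_v$ to $S(f)$ equals $-u'D$, where $D(v):=\det(f_u,f_{uv},\nu)|_{S(f)}(v)$; since $d\lambda(0)=(\lambda_u(0),\lambda_v(0))=(D(0),0)$ (using $f_v(0)=0$ and $\lambda_v(0)=0$), non-degeneracy forces $D(0)\ne0$, so $D$ is a unit near $0$. The technical heart is the chain-rule identity: if $u',\dots,u^{(j-1)}$ all vanish at $0$, then in the expansion of $\frac{d^{\,j-1}}{dv^{j-1}}\bigl[\,\partial_v\lambda(u(v),v)\,\bigr]$ every term other than $\partial_v^j\lambda(u(v),v)$ carries a factor $u^{(i)}$ with $1\le i\le j-1$ (an iterated chain rule, i.e.\ a Fa\`{a} di Bruno count); hence, applying the Leibniz rule to $-u'D$,
\[
\eta^j\lambda(0)=\partial_v^j\lambda(0)=\frac{d^{\,j-1}}{dv^{j-1}}\bigl[(\eta\lambda)|_{S(f)}\bigr](0)=-\,D(0)\,u^{(j)}(0).
\]
I expect this bookkeeping, together with keeping the derivation of $D(0)\ne0$ free of circularity (it may use only $f_v(0)=0$, $\lambda_v(0)=0$ and non-degeneracy) and with confirming $v'(0)\ne0$ before reparametrising, to be the main obstacle.

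Granting the displayed identity, the equivalence for $k\ge 2$ follows by a short induction. If $0$ is of the $k$-th kind, then $u^{(j)}(0)=0$ for $1\le j\le k-1$ and $u^{(k)}(0)\ne0$, and applying the identity for $j=1,\dots,k$ gives $\eta^j\lambda(0)=0$ for $1\le j\le k-1$ and $\eta^k\lambda(0)\ne0$, which is \ref{item:kth2}. Conversely, under \ref{item:kth2}, one proves $u^{(j)}(0)=0$ for $j=1,\dots,k-1$ by induction on $j$: the base case $u'(0)=0$ is part of the previous paragraph, and if $u',\dots,u^{(j)}$ vanish at $0$ with $j\le k-2$, the identity gives $0=\eta^{j+1}\lambda(0)=-D(0)\,u^{(j+1)}(0)$, whence $u^{(j+1)}(0)=0$; finally $\eta^k\lambda(0)=-D(0)\,u^{(k)}(0)\ne0$ forces $u^{(k)}(0)\ne0$. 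Thus $u'=\phi$ vanishes to order exactly $k-1$ at $0$, i.e.\ $0$ is of the $k$-th kind.
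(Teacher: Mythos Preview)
Your argument is correct, but it is dual to the paper's and noticeably heavier. The paper chooses coordinates so that the \emph{singular set} is straightened: $S(f)=\{v=0\}$ and $\lambda=v$, with the null direction written as $\eta=\partial_u+\varepsilon(u)\,\partial_v$ along $S(f)$. Then $\eta\lambda=\varepsilon(u)$ depends on $u$ alone, so the higher derivatives collapse immediately to $\eta^{l}\lambda=\varepsilon^{(l-1)}(u)$, and both conditions become the single statement that $\varepsilon$ vanishes to order exactly $k-1$ at $0$; no Fa\`a di Bruno bookkeeping or Leibniz induction is needed. You instead straighten the \emph{null field} via the flow box, so $\eta=\partial_v$, and pay for this by having $S(f)$ become a graph $(u(v),v)$ and by having to compare $\partial_v^{j}\lambda(0)$ with $\dfrac{d^{\,j-1}}{dv^{j-1}}\bigl[(\lambda_v)|_{S(f)}\bigr](0)$ through an iterated chain rule and then a Leibniz argument on $-u'D$. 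Both routes are legitimate; the paper's buys simplicity (one line in place of your inductive core), while yours makes the dependence on an \emph{extended} null vector field explicit from the outset and uses the concrete geometry of $\lambda=\det(f_u,f_v,\nu)$ to identify $D(0)=\lambda_u(0)\ne0$, which some readers may find more transparent than invoking the independence lemma plus a coordinate normalisation of $\lambda$.
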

Firstly we show that the condition
\ref{item:kth2} does not depend on the choices
of $\eta$ and $\lambda$.
It is obvious that for the choice of 
$\lambda$,
we shoe it for the choice of
$\tilde\eta$.
We show the following lemma.
\begin{lemma}
Let\/ $\lambda:(\R^2,0)\to(\R,0)$ be 
a function satisfying\/
$d\lambda(0)\ne0$, and let\/
$\eta$ be a vector field.
Let\/ $\overline\eta$ be another vector field
satisfying \/
$\overline\eta=h\tilde\eta$, 
where\/ $h$ is a function\/ $h(0)\ne0$,
on\/
$\lambda^{-1}(0)$.
Then, if
\begin{equation}\label{eq:katei}
\tilde\eta\lambda=\cdots=\tilde\eta^{k-1}\lambda(0)=0,\quad
\tilde\eta^{k}\lambda(0)\ne0\quad(k\geq1)
\end{equation}
hold, then
\begin{equation}\label{eq:keturon}
\overline\eta\lambda=\cdots=\overline\eta^{k-1}\lambda(0)=0,\quad
\overline\eta^{k}\lambda(0)\ne0\quad(k\geq1)
\end{equation}
hold.
\end{lemma}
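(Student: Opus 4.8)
The plan is to reduce the whole statement to one divisibility fact followed by an elementary induction organized by a filtration of ideals. First I would use the hypothesis $d\lambda(0)\ne0$: it guarantees that $\lambda^{-1}(0)$ is a regular curve and that $\lambda$ can be completed to a coordinate system, say $(x,y)$ on $(\R^2,0)$ with $\lambda=x$. The assumption that $\overline\eta=h\tilde\eta$ holds \emph{on} $\lambda^{-1}(0)$ then says exactly that the vector field $\overline\eta-h\tilde\eta$ vanishes identically along $\{x=0\}$. Writing this vector field in the frame $\partial/\partial x,\ \partial/\partial y$ and applying Hadamard's lemma (division by the coordinate $x$) to each of its two components, I obtain a smooth vector field $\zeta$ with $\overline\eta-h\tilde\eta=\lambda\,\zeta$ near $0$; that is, $\overline\eta=h\tilde\eta+\lambda\zeta$.

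Next comes the main computation. Let $I_m$ denote the ideal, in the local ring of function-germs at $0$, generated by $\lambda,\ \tilde\eta\lambda,\ \dots,\ \tilde\eta^{m}\lambda$ (so $I_0=(\lambda)$). I claim that for all $j\ge1$ one has $\overline\eta^{\,j}\lambda=h^{\,j}\,\tilde\eta^{\,j}\lambda+R_j$ with $R_j\in I_{j-1}$. For $j=1$ this is immediate: $\overline\eta\lambda=h\,\tilde\eta\lambda+\lambda(\zeta\lambda)$, so $R_1=\lambda(\zeta\lambda)\in I_0$. For the inductive step the key observation is that $\overline\eta$ sends $I_{j-1}$ into $I_j$; this follows from the Leibniz rule for the derivation $\overline\eta$ together with the identity $\overline\eta(\tilde\eta^{\,i}\lambda)=h\,\tilde\eta^{\,i+1}\lambda+\lambda\,\zeta(\tilde\eta^{\,i}\lambda)\in I_{i+1}$. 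Applying $\overline\eta$ to $\overline\eta^{\,j}\lambda=h^{\,j}\tilde\eta^{\,j}\lambda+R_j$, using this identity once more with $i=j$, and noting $\overline\eta R_j\in I_j$, one reads off $\overline\eta^{\,j+1}\lambda=h^{\,j+1}\tilde\eta^{\,j+1}\lambda+R_{j+1}$ with $R_{j+1}\in I_j$, which is the claim for $j+1$.

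Finally I would evaluate at the origin. By the hypothesis \eqref{eq:katei} and $\lambda(0)=0$, every generator of $I_{k-1}$ vanishes at $0$, hence $R_j(0)=0$ for all $1\le j\le k$, since $R_j\in I_{j-1}\subseteq I_{k-1}$. Therefore, for $1\le j\le k-1$ we get $\overline\eta^{\,j}\lambda(0)=h(0)^{j}\,\tilde\eta^{\,j}\lambda(0)=0$, while for $j=k$ we get $\overline\eta^{\,k}\lambda(0)=h(0)^{k}\,\tilde\eta^{\,k}\lambda(0)\ne0$ because $h(0)\ne0$ and $\tilde\eta^{\,k}\lambda(0)\ne0$; this is precisely \eqref{eq:keturon}.

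As for difficulty: the computation is routine once it is phrased through the filtration $I_0\subseteq I_1\subseteq\cdots$, the only genuine point of care being the passage from ``$\overline\eta$ and $h\tilde\eta$ agree on the curve $\lambda^{-1}(0)$'' to the pointwise identity $\overline\eta-h\tilde\eta=\lambda\zeta$, together with the bookkeeping that keeps each remainder $R_j$ exactly one level of the filtration below its leading term. I do not expect any deeper obstacle, and in particular no structure of $f$ beyond $d\lambda(0)\ne0$ is used, so the lemma applies verbatim to a singularity identifier $\lambda$.
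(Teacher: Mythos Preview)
Your argument is correct, and it takes a genuinely different route from the paper's. The paper proceeds by coordinate normalizations: it straightens $\tilde\eta$ to $\partial_v$, then uses the implicit function theorem to reduce $\lambda$ to $a(v)-u$ (so the hypothesis becomes $a(0)=\cdots=a^{(k-1)}(0)=0$, $a^{(k)}(0)\ne0$), normalizes $\overline\eta$ to $b(u,v)\partial_u+\partial_v$, and then proves by a direct induction the explicit identity
\[
\overline\eta^{\,l}\lambda = b\,h_0+\sum_{j=1}^{l-1}(\partial_v^{\,j}b)\,h_j+a^{(l)}
\]
for suitable functions $h_j$; the conclusion then follows because $b$ and its $v$-derivatives up to order $k-1$ vanish at $0$ (the latter point is left implicit in the paper). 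Your approach avoids all of these normalizations: after the single Hadamard step $\overline\eta=h\tilde\eta+\lambda\zeta$, you organize the induction through the ideal filtration $I_m=(\lambda,\tilde\eta\lambda,\dots,\tilde\eta^{m}\lambda)$ and the observation that $\overline\eta$ maps $I_{j-1}$ into $I_j$. This buys you a coordinate-free proof that treats all $k\ge1$ uniformly and makes the structure of the remainder transparent; the paper's version buys instead an explicit formula for $\overline\eta^{\,l}\lambda$ in the chosen chart, at the cost of more preliminary normalization and a slightly less self-contained final step.
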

\begin{proof}
Without loss of generality, we can assume
the coordinate system $(u,v)$ satisfies
$\tilde\eta=\partial v$.
By $\eta\lambda=0$ and $d\lambda(0)\ne0$, we have
$\lambda_u\ne0$.
Thus by the implicit function theorem,
there exists a function $a(v)$ such that
$$
\lambda(a(v),v)=0.
$$
Thus $\lambda$ is proportional to
$a(v)-u$, and without loss of generality,
we can assume $\lambda=a(v)-u$.
By the assumption \eqref{eq:katei},
$a(0)=\cdots=a^{(k-1)}(0)=0$, $a^{(k)}(0)\ne0$ holds.
We show \eqref{eq:keturon}. 
Since \eqref{eq:keturon} does not depend on the
non-zero functional multiplication 
of $\overline\eta$,
we may assume
$$
\overline\eta=b(u,v)\partial u+\partial v.
$$
We show that
\begin{equation}\label{eq:etal}
\overline\eta^l\lambda
=
b(u,v)h_0(u,v)
+
\sum_{j=1}^{l-1}\dfrac{\partial^j b}{\partial v^j}(u,v)h_j(u,v)
+a^{(l)}\quad (h_0,\ldots,h_{l-1}\text{ are functions}),
\end{equation}
by the induction.
When $l=1$, since
$\overline\eta\lambda=b\lambda_u+\lambda_v
=-b+a'$, \eqref{eq:etal} is true.
We assume that \eqref{eq:etal} for $l=i$.
Since
\begin{align*}
\overline\eta^{i+1}\lambda
&=
\overline\eta(\overline\eta^{i}\lambda)\\
&=
b\Big(
bh_0
+
b_vh_1
+
\cdots
+
b_{v^{i-1}}h_{i-1}+a^{(i)}\Big)_u\\
&\hspace{15mm}
+
b_vh_0+b(h_0)_v
+
b_{vv}h_1+b_v(h_1)_v
+
\cdots
+
b_{v^i}h_{i-1}
b_{v^{i-1}}(h_{i-1})_v+a^{(i+1)},
\end{align*}
\eqref{eq:etal} is true for $l=i+1$.
\end{proof}
\begin{proof}[Proof of Lemma {\rm \ref{lem:kthlambda}}]
We show the case $k\geq2$, since it is clear when $k=1$.
Since $0$ is non-degenerate, we can take a coordinate
system $(u,v)$ satisfying $S(f)=\{v=0\}$.
By the non-degeneracy, we may assume $\lambda=v$.
Furthermore, we can take
$\eta(u)=\partial_u+\ep(u)\partial_v$ as a null vector field.
Then $\phi(u)=\ep(u)$, \ref{item:kth1} is equivalent to
$\ep'(0)=\cdots=\ep^{(k-2)}(0)=0$ and
$
\ep^{(k-1)}(0)\ne0.
$
On the other hand, since
$\lambda=v$, it holds that
$\eta\lambda=\ep(u)$.
Then this depends only on $u$,
$\eta^2\lambda=\ep'(u)$ holds, and
$\eta^l\lambda=\ep^{(l-1)}(u)$ holds.
Thus \ref{item:kth2} is equivalent to
$\ep'(0)=\cdots=\ep^{(k-2)}(0)=0$ and
$\ep^{(k-1)}(0)\ne0$.
Hence we have the equivalency of
\ref{item:kth1} and \ref{item:kth2}.
\end{proof}

\section{Normal form of singular point of the second kind}
In this section,
we construct a normal form of the
singular point of the second kind which includes
swallowtail.
Furthermore, we study the relationships to
the known invariants of swallowtail.
Throughout this section,
let $f:(\R^2,0) \to (\R^{3},0)$ be a frontal and $\nu$ its
unit normal, and
let $0$ be a singular point of the second kind.
\subsection{Normal form of singular point of the second kind}
We take coordinate transformation on the source
and isometric transformation on the target,
we detect the normal form of singular points
of second kind.
By the non-degeneracy, $\rank df_0=1$ follows,
and by rotating coordinate system on the target,
we may assume that $f_u(0,0)=(a,0,0)$, $a>0$.
By changing coordinate system on the source,
we may assume $f$ has the form
$$
f(u,v)=(u,f_2(u,v),f_3(u,v)),\quad
f_u(0,0)=(1,0,0).
$$
Since the Jacobian matrix of $f$ is
$$
\pmt{
1&0\\
(f_2)_u(u,v)&(f_2)_v(u,v)\\
(f_3)_u(u,v)&(f_3)_v(u,v)},
$$
$S(f)=\{(f_2)_v=(f_3)_v=0\}$.
Thus we can take the null vector field
$\eta=\partial v$.
Since $0$ is non-degenerate,
$S(f)$ can be parametrized by
$(s(v),v)$ near $0$.
Moreover, $0$ is a singular point of the second kind,
$s(0)=s'(0)=0$, and $s''(0)\ne0$ hold.
We may assume $s''(0)>0$ by 
changing $(u,v)\mapsto(-u,-v)$ if necessary.
Thus there exists a function $\tilde s$ such that
$$
s(v)=\dfrac{v^2\tilde s(v)}{2},\quad \tilde s(0)>0.
$$
Setting
$
t(v)=\sqrt{s(v)},
$
we have
$$
s(v)=\dfrac{(vt(v))^2}{2},\quad t(0)\ne0.
$$
We take the diffeomorphism $\phi$ on the source
defined by
$$
\phi(u,v)=(u,vt(v)),
$$
and consider $(\tilde u,\tilde v)=\phi(u,v)$
as the new coordinate system.
Since
$$
\phi(s(v),v)
=
(s(v),vt(v))
=
\left(\dfrac{(vt(v))^2}{2},vt(v)\right)
=
(\tilde v^2/2,\tilde v),
$$
$S(f)=\{(\tilde v^2/2,\tilde v)\}$ holds.
Furthermore, the first component of $f(\tilde u,\tilde v)$
is $\tilde u$, we see that
$\partial \tilde v$ is a null vector field.
Now we may assume that $f$ has the form
\begin{itemize}
\item $f(u,v)=(u,f_2(u,v),f_3(u,v))$,
\item $\partial v$ is a null vector field,
\item $S(f)=\{(v^2/2,v)\}$.
\end{itemize}
Since $(f_2)_v$ and $(f_3)_v$ vanish on
$S(f)=\{u=v^2/2\}$,
there exist functions $g_1,h_1$ such that
$$
(f_2)_v(u,v)=(v^2/2-u)g_1(u,v),\quad
(f_3)_v(u,v)=(v^2/2-u)h_1(u,v).
$$
By the non-degeneracy, 
$(g_1(0,0),h_1(0,0))\ne(0,0)$.
Since
$$
f_2(u,v)=\int(v^2/2-u)g_1(u,v)\,dv,\quad
f_3(u,v)=\int(v^2/2-u)h_1(u,v)\,dv,
$$
taking the partial integration,
\begin{equation}\label{eq:int}
\begin{array}{rcl}
\displaystyle
f_2(u,v)
&=&\displaystyle
\int(v^2/2-u)g_1(u,v)\,dv\\
&=&\displaystyle
(v^2/2-u)g_2(u,v)-\int v g_2(u,v)\,dv\\
&=&\displaystyle
(v^2/2-u)g_2(u,v)-v g_3(u,v)+\int g_3(u,v)\,dv\\
&=&\displaystyle
(v^2/2-u)g_2(u,v)-v g_3(u,v)+g_4(u,v)
\end{array}
\end{equation}
holds, where
$$
g_i(u,v)=\dfrac{\partial g_{i+1}}{\partial v}(u,v).
$$
Similarly, we have
\begin{equation}\label{eq:int2}
f_3(u,v)
=(v^2/2-u)h_2(u,v)-v h_3(u,v)+h_4(u,v),\quad
h_i(u,v)=\dfrac{\partial h_{i+1}}{\partial v}(u,v).
\end{equation}
Since $(g_1(0,0),h_1(0,0))\ne(0,0)$,
\begin{equation}\label{eq:nu2}
\nu=\dfrac{\nu_2}{|\nu_2|},\quad
\Big(\nu_2
=
(h_1f_{2u}-g_1f_{3u},-h_1,g_1)\Big)
\end{equation}
gives a unit normal vector for $f$,
because of 
\begin{align}
f_u(u,v)&=\big(1,(f_2)_u,(f_3)_u\big)\nonumber\\
&=\big(1,
-g_2+(v^2/2-u)g_{2u}-vg_{3u}+g_{4u},\nonumber\\
&\hspace{40mm}
-h_2+(v^2/2-u)h_{2u}-vh_{3u}+h_{4u}\big),
\label{eq:f2u}\\
f_v(u,v)
&=
\big(0,(v^2/2-u)g_1,(v^2/2-u)h_1\big).
\end{align}
Since $f_v(0,0)=0$, $f$ is a front
if and only if $\nu_v(0,0)\ne0$,
and it is equivalent to that
$\nu_2$ and $\nu_{2v}$ are linearly independent.
Since
\begin{align*}
\nu_{2v}
=&
(h_0f_{2u}+h_1f_{2uv}-g_0f_{3u}-g_1f_{3uv},-h_0,g_0)\\
=&
\Big(h_0f_{2u}+h_1(-g_1(u,v)+(v^2/2-u)g_{1u}(u,v))\\
&\hspace{10mm}-g_0f_{3u}-g_1(-h_1(u,v)+(v^2/2-u)h_{1u}(u,v)),-h_0,g_0
\Big)\\
=&
(h_0f_{2u}+(v^2/2-u)h_1g_{1u}(u,v)
-g_0f_{3u}-(v^2/2-u)g_1h_{1u}(u,v),-h_0,g_0)
\end{align*}
and
\begin{align*}
&
\rank\pmt{
h_1f_{2u}-g_1f_{3u}&-h_1&g_1\\
h_0f_{2u}+(v^2/2-u)h_1g_{1u}(u,v)
-g_0f_{3u}-(v^2/2-u)g_1h_{1u}(u,v)&-h_0&g_0}(0)\\
=&
\rank\pmt{
0&-h_1&g_1\\
(v^2/2-u)h_1g_{1u}(u,v)
-(v^2/2-u)g_1h_{1u}(u,v)&-h_0&g_0}(0)
\\
=&
\rank\pmt{
0&-h_1&g_1\\
0&-h_0&g_0}(0),
\end{align*}
it is equivalent to
$$
\det\pmt
{
g_1(0)&g_0(0)\\
h_1(0)&h_0(0)
}\ne0.
$$
By rotating coordinate system on the target
around the axis which contains $(1,0,0)$,
we may assume 
$
\nu_2/|\nu_2|
=
(0,0,1)
$, namely, 
$g_1(0,0)=g_{4vvv}(0,0)>0$ and
$h_1(0,0)=h_{4vvv}(0,0)=0$.
Moreover, by $f(0,0)=(0,g_4(0,0),h_4(0,0))$,
we have $g_4(0,0)=0$, $h_4(0,0)=0$,
and by
$f_u(0,0)=(1,0,0)$ and
\eqref{eq:f2u},
\begin{align*}
f_{2u}(0,0)=&-g_2(0,0)+g_{4u}(0,0)=-g_{4vv}(0,0)+g_{4u}(0,0)=0,\\
f_{3u}(0,0)=&-h_2(0,0)+h_{4u}(0,0)=-h_{4vv}(0,0)+h_{4u}(0,0)=0.
\end{align*}
Summarizing up the above arguments,
we have the following proposition.
\begin{proposition}\label{swnormal}
For any function\/ $g$ and\/ $h$ satisfying\/
$g_{vvv}(0,0)>0$,
$g(0,0)=h(0,0)=0$,
$g_u(0,0)-g_{vv}(0,0)=0$,
$h_u(0,0)-h_{vv}(0,0)=0$ and\/
$h_{vvv}(0,0)=0$,
\begin{equation}\label{eq:swnormal}
\begin{array}{rl}
f(u,v)=&\Bigg(
u,\ 
\left(\dfrac{v^2}{2}-u\right)
g_{vv}(u,v)-vg_v(u,v)+g(u,v),\\
&\hspace{15mm}
\left(\dfrac{v^2}{2}-u\right)
h_{vv}(u,v)-vh_v(u,v)+h(u,v)\Bigg)
\end{array}
\end{equation}
is a frontal satisfying
that\/
$0$ is a singular point of the second kind,
and\/
$f_u(0,0)=(1,0,0)$, $\eta=\partial_v$, $S(f)=\{v^2/2-u=0\}$.
Moreover, if
$$
h_{vvvv}(0,0)\ne0,
$$
then\/ $0$ is a swallowtail.
Conversely, for any 
singular point of the second kind\/ $p$ of
a frontal\/ $f:U\to\R^3$, there exists
a coordinate system\/ $(u,v)$ on\/ $U$, 
and an orientation preserving isometry\/
$\Phi$ on\/ $\R^3$ such that\/
$\Phi\circ f(u,v)$ can be written in the form\/
\eqref{eq:swnormal}.
\end{proposition}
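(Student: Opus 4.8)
The plan is to prove the two halves separately. The converse assertion is, up to bookkeeping, already contained in the discussion preceding the proposition: starting from an arbitrary second-kind singular point $p$ (so that $\rank df_p=1$), one first applies an element of $SO(3)$ to arrange $f_u(p)=(a,0,0)$ with $a>0$, then a source coordinate change to reach the shape $f=(u,f_2,f_3)$ with $S(f)=\{v^2/2-u=0\}$ and $\eta=\partial_v$, performs the partial integrations \eqref{eq:int} and \eqref{eq:int2}, and finally applies one more rotation about the first axis together with $(u,v)\mapsto(-u,-v)$ if necessary. All target changes used there are rotations, so their composite $\Phi$ lies in $SO(3)$ and is in particular an orientation preserving isometry (no translation is needed since $f(p)=0$), while the source changes are diffeomorphism-germs; and the five scalar conditions imposed on $g:=g_4$ and $h:=h_4$ are precisely the relations extracted along the way (recall $g_1=g_{vvv}$, $h_1=h_{vvv}$). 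So for the converse I would only need to record this.

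For the direct statement I would take $f$ as in \eqref{eq:swnormal} and verify the listed properties by direct computation. Differentiating and using $g(0,0)=h(0,0)=0$, $g_u(0,0)=g_{vv}(0,0)$, $h_u(0,0)=h_{vv}(0,0)$ gives $f_u(0,0)=(1,0,0)$, and one computes
$$
f_v(u,v)=\left(\dfrac{v^2}{2}-u\right)\bigl(0,\,g_{vvv}(u,v),\,h_{vvv}(u,v)\bigr).
$$
Since $g_{vvv}(0,0)>0$, the pair $(g_{vvv},h_{vvv})$ is nonzero near $0$, so $\rank df=1$ exactly on $\{v^2/2-u=0\}$; this gives $S(f)=\{v^2/2-u=0\}$ and shows $\eta=\partial_v$ is a null vector field. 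Then $\nu_2=(h_1f_{2u}-g_1f_{3u},-h_1,g_1)$ (with $g_1=g_{vvv}$, $h_1=h_{vvv}$) is orthogonal to $f_u$ and to $f_v$ and satisfies $|\nu_2|^2\ge g_{vvv}^2+h_{vvv}^2>0$ near $0$, so $\nu=\nu_2/|\nu_2|$ is a well-defined unit normal — hence $f$ is a frontal — with $\nu(0,0)=(0,0,1)$ because $h_{vvv}(0,0)=0$.

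It then remains to get the "second kind" and "swallowtail" parts. The factorization of $f_v$ gives $\det(f_u,f_v,\nu)=(v^2/2-u)\det\bigl(f_u,(0,g_{vvv},h_{vvv}),\nu\bigr)$, and the second factor equals $g_{vvv}(0,0)\ne0$ at the origin; hence $\lambda:=v^2/2-u$ is a singularity identifier with $d\lambda(0)\ne0$, so $0$ is non-degenerate. Moreover $\eta\lambda=v$ and $\eta^2\lambda=1$, so $\eta\lambda(0)=0$ and $\eta^2\lambda(0)\ne0$, and Lemma \ref{lem:kthlambda} shows $0$ is a singular point of the second kind. Finally, $f$ is a front at $0$ iff $\nu_v(0,0)\ne0$, equivalently iff $\nu_2$ and $\nu_{2v}$ are linearly independent at $0$; the rank computation for $(\nu_2,\nu_{2v})$ carried out above reduces this to $g_{vvv}(0,0)h_{vvvv}(0,0)-g_{vvvv}(0,0)h_{vvv}(0,0)\ne0$, which in view of $h_{vvv}(0,0)=0$ and $g_{vvv}(0,0)>0$ is equivalent to $h_{vvvv}(0,0)\ne0$. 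Under this hypothesis $f$ is a front with a second-kind singular point at $0$, hence a swallowtail by the remark following the definition of the $k$-th kind (cf. \cite{krsuy}).

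The bulk of this is routine. The steps that need the most care are the verification that $\lambda=v^2/2-u$ genuinely qualifies as a singularity identifier — i.e. that the cofactor appearing in the factorization of $\det(f_u,f_v,\nu)$ does not vanish at $0$ — and the reduction of the front condition $\nu_v(0,0)\ne0$ to $h_{vvvv}(0,0)\ne0$ through the rank computation for the pair $(\nu_2,\nu_{2v})$; both of these rely on the prior normalizations $f_u(0,0)=(1,0,0)$, $\nu(0,0)=(0,0,1)$ and $h_{vvv}(0,0)=0$.
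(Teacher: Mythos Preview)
Your proposal is correct and follows essentially the same approach as the paper: the proposition is stated there as a summary of the preceding constructive discussion, which is precisely the converse direction you describe, while the forward direction (that \eqref{eq:swnormal} indeed has a second-kind singular point, is a frontal, and is a front exactly when $h_{vvvv}(0,0)\ne0$) is left implicit in the paper and you fill it in by the obvious direct computations together with Lemma~\ref{lem:kthlambda}. The only minor addition on your side is making explicit that $\lambda=v^2/2-u$ is a singularity identifier and invoking Lemma~\ref{lem:kthlambda} to certify the second-kind condition, which is the natural way to close the loop.
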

\begin{remark}
Conditions $g(0,0)=h(0,0)=0$,
$g_u(0,0)-g_{vv}(0,0)=0$,
$h_u(0,0)-h_{vv}(0,0)=0$,
$g_{vvv}(0,0)>0$,
$h_{vvv}(0,0)=0$
are just for the reducing coefficients.
If one want to obtain 
a second kind singular point (respectively swallowtail),
taking
$g$ and $h$ satisfying that
$$
(g_{vvv}(0,0),h_{vvv}(0,0))\ne(0,0),\quad
\left(\text{respectively,\ }\det
\pmt{
g_{vvv}(0,0)&h_{vvv}(0,0)\\
g_{vvvv}(0,0)&h_{vvvv}(0,0)}
\ne0\right),
$$
and forming \eqref{eq:swnormal} is enough.
\end{remark}
We remark that 
another different normal form of swallowtail is obtained
in \cite{fukui} by a different view point.
\begin{example}
Let us set
$$
g=v^3/6,\quad 
h=v^k/k!,\quad(k=4,5,6),
$$
and consider \eqref{eq:swnormal}.
Then the figure of $f$
can be drawn as in Figure \ref{fig:swexs}.
\begin{figure}[!ht]
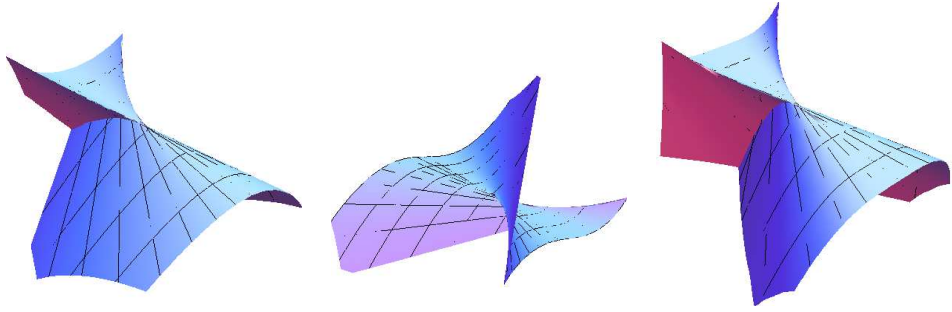

\centering
\includegraphics[width=.25\linewidth]{figsw0s.eps}
\hspace{1mm}
\includegraphics[width=.25\linewidth]{figv50s.eps}
\hspace{1mm}
\includegraphics[width=.25\linewidth]{figv60s.eps}
\caption{$f$ of $k=4,5,6$}
\label{fig:swexs}
\end{figure}
\end{example}
\begin{remark}
By the above construction,
we can obtain the normal forms for 
singular points of $k$-th kind by the same
mannar.
For functions $g,h$,
\begin{equation}\label{eq:normalbig}
\begin{array}{l}
\Bigg(u,
\displaystyle
\left(\dfrac{v^k}{k!}-u\right)g^{(k)}+
\sum_{i=1}^k(-1)^i\dfrac{v^{k-i}}{(k-i)!}g^{(k-i)}(u,v),\\
\hspace{30mm}
\displaystyle
\left(\dfrac{v^k}{k!}-u\right)h^{(k)}+
\sum_{i=1}^k(-1)^i\dfrac{v^{k-i}}{(k-i)!}h^{(k-i)}(u,v)
\Bigg)
\end{array}
\end{equation}
at $0$ is a $k$-th kind of singular point 
if
$(g^{(k)},h^{(k)})(0,0)\ne(0,0).$
Moreover, if
$$
\det\pmt
{
g^{(k+1)}(0)&g^{(k+2)}(0)
\\
h^{(k+1)}(0)&g^{(k+2)}(0)
}\ne0,
$$
then it is a front.
Here, $g'=\partial g/\partial v=g^{(1)}$,
and
$g^{(i)}=\partial g^{(i-1)}/\partial v$, for example.
\end{remark}
\begin{example}\label{eq:normalbut}
Let us set
$g=v^4/4!$ and $h=u^2/2+v^5/5!$. 
Then the surface obtained by \eqref{eq:normalbig}
is $(u,v)\mapsto
(u, -u v + v^4/24, (-15 u^2 + 15 u v^2 - v^5)/30)$,
and it 
can be drawn in Figure \ref{fig:normalbig}.
This singularity is called cuspidal butterfly.
\begin{figure}[!ht]
\centering
\includegraphics[width=.25\linewidth]{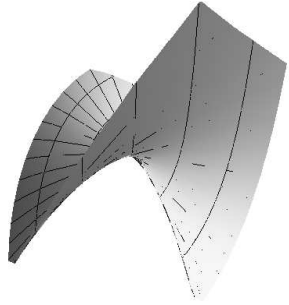}
\caption{The surface of example \ref{eq:normalbut}.}
\label{fig:normalbig}
\end{figure}
\end{example}

\subsection{Normal form that the singular set is the $u$-axis}
\label{sec:singuaxis}
The singular set $S(f)$ of $f$ in the form \eqref{eq:swnormal}
is a parabola and the null vector field on $S(f)$ is
constantly $\partial_v$.
On the other hand,
sometimes we want to have a form
satisfying that the singular set is the $u$-axis,
although the null vector field is not constant.
For that purpose,
take $f$ as in \eqref{eq:swnormal},
and set
$\tilde f(\tilde u,\tilde v)=
f(\tilde u + \tilde v^2/2, \tilde v)$.
Then 
$F(x,y)=-\tilde f(-y,x)$
is a frontal, and $0$ is a singular point of
the second kind satisfying
$\eta=\partial_u+u\partial_v$ and $S(f)=\{v=0\}$.

\subsection{Forms in the low degrees}
In the Proposition \ref{swnormal},
we have the normal forms in the low degrees
in the following manner.
In the form \eqref{eq:swnormal},
we set
$$
g(u,v)=g_5(u,v)+g_6(u,v),\quad
h(u,v)=h_5(u,v)+h_6(u,v),
$$
where $g_6,h_6$ satisfy $j^5g_6(0,0)=j^5h_6(0,0)=0$,
and 
$g_5,h_5$ are
$$
g_5(u,v)
=
\sum_{i+j=1}^5\dfrac{a_{ij}}{i!j!}u^iv^j,\quad
h_5(u,v)
=
\sum_{i+j=1}^5\dfrac{b_{ij}}{i!j!}u^iv^j,
$$
where 
$
a_{ij},b_{ij}\in\R$
and $a_{02}=a_{10}$, $b_{02}=b_{10}$, $a_{03}\ne0$, $b_{03}=0$.
Then 
$f$ has the form
\begin{equation}\label{eq:swnormalkeisu}
\begin{array}{l}
\displaystyle
\Bigg(u,\ 
\dfrac{-2 a_{12} + a_{20}}{2} u^2 
+ \dfrac{-3 a_{22} + a_{30}}{6} u^3 
+ \dfrac{-4 a_{32} + a_{40}}{24} u^4
- a_{03} u v - a_{13} u^2 v
- \dfrac{a_{23}}{2} u^3 v  \\[4mm]
\displaystyle
\hspace{15mm} 
- \dfrac{a_{04}}{2} u v^2 
- \dfrac{a_{14}}{2} u^2 v^2
+ \dfrac{a_{03}}{6} v^3 
+ \dfrac{-a_{05} + a_{13}}{6} u v^3 
+ \dfrac{a_{04}}{8} v^4+G(u,v),\\[4mm]
\displaystyle
\hspace{30mm}
\dfrac{-2 b_{12} + b_{20}}{2} u^2 
+\dfrac{-3 b_{22} + b_{30}}{6} u^3 
+ \dfrac{-4 b_{32} + b_{40}}{24} u^4
- b_{13} u^2 v 
- \dfrac{b_{23}}{2} u^3 v   \\[4mm]
\displaystyle
\hspace{30mm}
- \dfrac{b_{04}}{2} u v^2 
- \dfrac{b_{14}}{2} u^2 v^2
+ \dfrac{-b_{05} + b_{13}}{6} u v^3 
+ \dfrac{b_{04}}{8} v^4+H(u,v)
\Bigg),
\end{array}
\end{equation}
where, $G,H$ are functions their $4$-jet vanishes:
$j^4G(0,0)=j^4H(0,0)=0$,
and
$$
\begin{array}{l}
G(u,v)=\dfrac{1}{2}(g_5)_{vv}(u,v)v^2
+\left(\dfrac{v^2}{2}-u\right)(g_6)_{vv}(u,v)
-v(g_6)_v(u,v)+g_6(u,v)\\[4mm]
H(u,v)=\dfrac{1}{2}(h_5)_{vv}(u,v)v^2
+\left(\dfrac{v^2}{2}-u\right)(h_6)_{vv}(u,v)
-v(h_6)_v(u,v)+h_6(u,v).
\end{array}
$$
\subsection{Invariants}
In \cite{MSUY}, several invariants of 
singular points of the second kind
are introduced.
We take a parametrization $\gamma(t)$ of $S(f)$ and
assume $\gamma(0)=0$.
We set $\hat\gamma=f\circ\gamma$ as above.
The {\it limiting normal curvature\/}
$\kappa_\nu$ of $f$ at $0$ is defined by 
$$
\kappa_\nu(0)=
\lim_{t\to 0}\dfrac{\inner{\hat\gamma''(t)}{\nu(\gamma(t))}}
{|\hat\gamma(t)|^2}
$$
with respect to the unit normal vector $\nu$
(cf. \cite[(2.2)]{MSUY}),
where $\hat\gamma$ is the singular locus.
The {\it normalized cuspidal curvature\/} $\mu_c$
is defined by 
$$
\mu_c=\left.\dfrac{-|f_u|^3\inner{f_{uv}}{\nu_v}}
{|f_{uv}\times f_u|^2}\right|_{(u,v)=(0,0)}
$$
(cf. \cite[(4.6)]{MSUY}), where
$(u,v)$ is a coordinate system satisfying
$\ker df_0=\spann{\partial v}$.
The limiting normal curvature and
normalized cuspidal curvature relate
the boundedness of the Gaussian and mean
curvature near singular points of the second kind
\cite[Propositions 4.2, 4.3, Theorem 4.4]{MSUY}.
The {\it limiting singular curvature\/} $\tau_s$
is defined by the limit of singular curvature \cite{SUY}
and it is computed by
$$
\tau_s=
\left.\dfrac{
\det(\hat\gamma'',\hat\gamma''',\nu(\gamma))}{|\hat\gamma''|^{5/2}}
\right|_{t=0}
$$
(cf. \cite[p. 272, Proposition 4.9]{MSUY}).
The limiting singular curvature
measures the wideness of the cusp
of the singular points of the second kind.

We assume that $f$ is a singular points of the second kind
given in the form \eqref{eq:swnormalkeisu}.
Then
\begin{equation}\label{eq:invkeisan}
\kappa_\nu(u)
=
-2 b_{12} + b_{20},\quad
\mu_c
=
\dfrac{b_{04}}{{a_{03}}^2},\quad
\tau_s=2a_{03},
\end{equation}
where 
$\nu$ is the unit normal vector satisfying $\nu(0,0)=(0,0,1)$.

\section{Geometric foliations near swallowtail}
In this section, as an application of the
normal form of swallowtail,
we study geometric foliations near swallowtail
defined by binary differential equations.
\subsection{Binary differential equations}
Let $U\subset \R^2$ be an open set and $(u,v)$ a coordinate
system on $U$.
Consider a $2$-tensor
\begin{equation}\label{eq:bdeabc}
\omega=p\,du^2+2q\,dudv+r\,dv^2
\end{equation}
where $p,q,r$ are functions on $U$.
If a vector field $X=x_1\partial_u+x_2\partial_v$ 
satisfies 
$$\omega(X,X)=px_1^2+2qx_1x_2+rx_2^2=0,$$
then 
direction of $X$ is called the direction
of $\omega=0$, and
the integral curves of $X$ is
called the solutions of 
$\omega=0$.
We call $\omega=0$ a
{\it binary differential equation\/} (BDE).
We set $\delta=q^2-pr$.
Then 
$\omega=0$ defines two linearly independent
directions on $\{\delta>0\}\subset U$,
and
it defines one direction on 
$\{\delta=0\}$, and
it defines no direction on 
$\{\delta<0\}$.

\begin{definition}
Two BDEs $\omega_1=0$, $\omega_2=0$
are {\it equivalent\/}
if there exist diffeomorphism
$\Phi:(\R^2,0)\to(\R^2,0)$,
and a function $\rho:(\R^2,0)\to\R$ $(\rho(0)\ne0)$
such that
$$\rho\ \Phi^*\omega_1=\omega_2.$$
\end{definition}
We identify two BDEs if they are equivalent.
If a $2$-tensor $\omega$ as in \eqref{eq:bdeabc} satisfies
$\delta(0)>0$, then the BDE
$\omega=0$ is equivalent to 
$dx^2-dy^2=0$.
We consider here the case
$r(0)\ne0$, $p_u(0)=0$, $p_v(0)\ne0$
following \cite{bt}, 
since only this case is needed for our consideration.
See \cite{btnl,bt,d,d2} for general study of BDEs.
Dividing $\omega$ by $r$,
and putting
$
\tilde p=p/r$, $\tilde q=q/r
$,
we consider 
\begin{equation}\label{eq:bde1}
\begin{array}{rl}
\tilde \omega&=
\tilde p \,du^2+2\tilde q \,dudv+dv^2,\\
\tilde p&=p_{01}v
+\dfrac{p_{20}}{2}u^2
+p_{11}uv
+\dfrac{p_{02}}{2}v^2+O(3),\quad
p_{01}\ne0\\
\tilde q&=q_{10}u+q_{01}v
+\dfrac{q_{20}}{2}u^2
+q_{11}uv
+\dfrac{q_{02}}{2}v^2+O(3),\\
\end{array}
\end{equation}
where $O(r)$ stands for the terms whose degrees are
greater than or equal to $r$.
We may assume $p_{01}>0$ without loss of
generality.
Considering the coordinate change
$$
u=
-\sqrt{p_{01}}\,U,\quad
v=V - \dfrac{q_{10}}{2p_{01}} U^2 
+ \dfrac{q_{01}}{\sqrt{p_{01}}} UV,
$$
and dividing by the coefficient of
$dv^2$, we see 
$\tilde\omega=0$ is equivalent to
$(A/C)\,dU^2+2(B/C)\,dUdV+dV^2=0$, where
\begin{align*}
A&=
V
+
\dfrac{p_{20}- 2q_{10}^2-p_{01}q_{10}}{2 p_{01}^2}U^2
-
\dfrac{p_{11}-2q_{10}q_{01}-p_{01}q_{01}}{p_{01}\sqrt{p_{01}}}
U V
+
\dfrac{p_{02}-2 q_{01}^2}{2 p_{01}}V^2
+O(3)\\
2B&=\dfrac{q_{10}q_{01}-q_{20}}{p_{01}\sqrt{p_{01}}}U^2
-
2\dfrac{q_{01}^2-q_{11}}{p_{01}}U V
-
\dfrac{q_{02}}{\sqrt{p_{01}}}V^2
+O(3)\\
C&=
1+\dfrac{2 q_{01} U}{\sqrt{p_{01}}}+\dfrac{q_{01}^2 U^2}{p_{01}}
+O(3)
\end{align*}
and it is equal to
$A'\,dU^2+2B'\,dUdV+dV^2=0$, where
\begin{align*}
A'&=
V
+
\dfrac{p_{20}- 2q_{10}^2-q_{10}p_{01}}{p_{01}^2}
\dfrac{U^2}{2}
-
\dfrac{p_{11}-2 q_{01} q_{10}+q_{01} p_{01}}{p_{01}\sqrt{p_{01}}}
U V
+
\dfrac{p_{02}-2 q_{01}^2}{p_{01}}\dfrac{V^2}{2}
+O(3)\\
2B'&=
\dfrac{q_{01} q_{10}-q_{20}}{p_{01}\sqrt{p_{01}}}U^2
-
2\dfrac{q_{01}^2-q_{11}}{p_{01}}U V
-
\dfrac{q_{02}}{\sqrt{p_{01}}}V^2
+O(3)
\end{align*}
Now we consider a BDE 
$\overline\omega=\overline p\,du^2+2\overline q\,dudv
+dv^2=0$, where
$$
\overline p
=
v+\dfrac{\overline p_{20}}{2}u^2
+\overline p_{11}uv+\dfrac{\overline p_{02}}{2}v^2+O(3),\quad
\overline q=\dfrac{\overline q_{20}}{2}u^2
+\overline q_{11}uv+\dfrac{\overline q_{02}}{2}v^2+O(3),\quad
p_{20}\ne0.$$
Consider a coordinate transformation
$$
u=
U+\dfrac{x_{20}}{2} U^2 + 
x_{11} UV,\quad
v=V + 
\dfrac{x_{30}}{6} U^3 
+ \dfrac{x_{21}}{2} U^2 V 
+ \dfrac{x_{12}}{2} UV^2,
$$
where
$$
x_{20}=-\dfrac{\overline p_{11}}{2},\ 
x_{11}=-\dfrac{\overline p_{02}}{4},\ 
y_{30}=-\overline q_{20},\ 
y_{21}=\dfrac{-4 \overline q_{11} 
+ \overline p_{02}}{4},\ 
y_{12}=-\overline p_{02},
$$
and dividing by the coefficient of
$dv^2$,
we see $\overline \omega=0$ is equivalent to
$(P/R)\,dU^2+2(O(3)/R)\,dUdV+dV^2=0$, where
$$
P=V+\dfrac{\overline p_{20}}{2}U^2+O(3),\quad
R=
1+(\overline p_{02}-4 \overline q_{11}) U^2/4
-2 \overline q_{02} UV+O(3)
$$
and it is equal to $k=3$ of
\begin{equation}\label{eq:bde3}
\Big(V+\dfrac{\overline p_{20}}{2}U^2+O(k)\Big)
\,dU^2
+
2O(k)\,dUdV+dV^2=0.
\end{equation}
It known that 
for any $r\geq3$, the BDE
\eqref{eq:bde3}
is equivalent to $k=r$ of \eqref{eq:bde3}
(\cite[Proposition 4.9]{bt} see also \cite{d}).
We set
$$
A(\tilde\omega)=
\dfrac{p_{20}- 2q_{10}^2-p_{01}q_{10}}{p_{01}^2}
$$
for a BDE $\tilde\omega=0$ of the form 
\eqref{eq:bde1}.
Summarizing up the above arguments,
we have the following fact.
\begin{fact}
For any\/ $r\geq3$,
the BDE\/ $\tilde\omega$ of the form\/ \eqref{eq:bde1}
is equivalent to
$$
\left(u+
A(\tilde\omega)\dfrac{u^2}{2}
+O(r)\right)\,
du^2
+
2O(r)\,dudv+dv^2=0.
$$
\end{fact}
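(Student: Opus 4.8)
This is essentially a summary of the chain of normalizations carried out just above it, so the plan is to organize those computations into three stages, exploiting at each stage that BDE-equivalence allows both a source diffeomorphism and multiplication by a nonvanishing function, so that the $dv^2$-coefficient can always be renormalized to $1$.

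First I would remove the $1$-jet terms that can be removed. Starting from $\tilde\omega$ in the form \eqref{eq:bde1} with $p_{01}>0$, the substitution $u=-\sqrt{p_{01}}\,U$, $v=V-\dfrac{q_{10}}{2p_{01}}U^2+\dfrac{q_{01}}{\sqrt{p_{01}}}UV$ followed by division by the new $dV^2$-coefficient rescales the linear term of the $du^2$-coefficient to $V$ and kills the linear term of the $dudv$-coefficient. The bookkeeping point is to check that in the resulting $du^2$-coefficient $V+A(\tilde\omega)\dfrac{U^2}{2}+(\text{remaining quadratic terms})+O(3)$ the coefficient of $\dfrac{U^2}{2}$ equals exactly $A(\tilde\omega)=\dfrac{p_{20}-2q_{10}^2-p_{01}q_{10}}{p_{01}^2}$, the quantity named in the statement.

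Second, writing the output as $\overline\omega=\overline p\,du^2+2\overline q\,dudv+dv^2$ with $\overline p=v+\dfrac{\overline p_{20}}{2}u^2+\overline p_{11}uv+\dfrac{\overline p_{02}}{2}v^2+O(3)$, $\overline q=\dfrac{\overline q_{20}}{2}u^2+\overline q_{11}uv+\dfrac{\overline q_{02}}{2}v^2+O(3)$, and $\overline p_{20}=A(\tilde\omega)$, I would apply $u=U+\dfrac{x_{20}}{2}U^2+x_{11}UV$, $v=V+\dfrac{x_{30}}{6}U^3+\dfrac{x_{21}}{2}U^2V+\dfrac{x_{12}}{2}UV^2$ with $x_{20}=-\dfrac{\overline p_{11}}{2}$, $x_{11}=-\dfrac{\overline p_{02}}{4}$, $x_{30}=-\overline q_{20}$, $x_{21}=\dfrac{-4\overline q_{11}+\overline p_{02}}{4}$, $x_{12}=-\overline p_{02}$; these are precisely the values that annihilate the $uv$- and $v^2$-terms of $\overline p$ and the whole quadratic part of $\overline q$ while leaving the $u^2$-term of $\overline p$ untouched. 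After dividing by the new $dV^2$-coefficient the equation becomes $\bigl(V+\dfrac{\overline p_{20}}{2}U^2+O(3)\bigr)dU^2+2\,O(3)\,dUdV+dV^2=0$, i.e.\ the case $k=3$ of \eqref{eq:bde3}.

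Finally I would invoke \cite[Proposition 4.9]{bt} (see also \cite{d}): for every $r\geq3$ the BDE \eqref{eq:bde3} with $k=3$ is equivalent to the one with $k=r$. Since no reduction has disturbed the $2$-jet $V+\overline p_{20}\dfrac{U^2}{2}$, renaming $(U,V)$ back to $(u,v)$ gives the asserted form with coefficient $A(\tilde\omega)$. Throughout one must check that each of the two substitutions is a genuine diffeomorphism-germ (its linear part is invertible) and that the renormalizing factors are nonzero at $0$; granting this, the only substantive input is the Bruce--Tari normalization and the rest is the two explicit jet computations. The step I expect to need the most care is the invariance of the coefficient of $u^2/2$ under the second substitution, since that invariance is exactly what makes $A(\tilde\omega)$ well defined and is the content of the statement.
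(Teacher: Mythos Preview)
Your proposal is correct and follows essentially the same approach as the paper: the Fact is stated there as a summary of the preceding two explicit coordinate changes (first killing the linear part of $\tilde q$ and normalizing the linear part of $\tilde p$ to $v$, then removing the remaining quadratic terms other than $\overline p_{20}u^2/2$) together with the appeal to \cite[Proposition~4.9]{bt} for pushing the normalization from $k=3$ to arbitrary $r\ge3$. Your identification of the one nontrivial bookkeeping point---that the second substitution, having linear part the identity and $v$-shift of order $\ge3$, leaves the $u^2/2$-coefficient in the $du^2$-term untouched---is exactly what makes $A(\tilde\omega)$ emerge as the invariant, and is implicit but not spelled out in the paper.
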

On the other hand,
the configuration of
the solutions of the BDE
$$
\omega_{l}
=(v+lu^2/2)\,du^2+dv^2=0
$$
is 
called {\it folded saddle\/} if $l<0$,
called {\it folded node\/} if $0<l<1/8$,
and 
called {\it folded focus\/} if $l>1/8$,
and they are drawn 
as in Figure \ref{fig:folded}.
\begin{figure}[!ht]
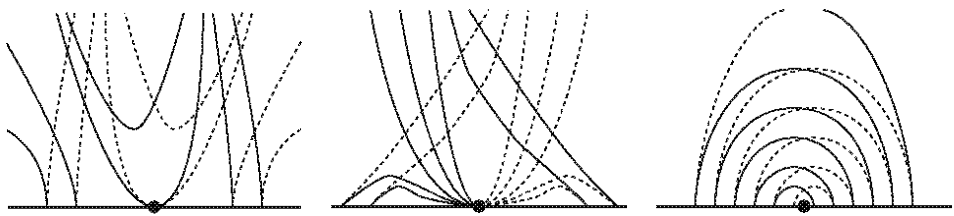

\centering
\includegraphics[width=.25\linewidth]{figfoldsads.eps}
\hspace{1mm}
\includegraphics[width=.25\linewidth]{figfoldnodes.eps}
\hspace{1mm}
\includegraphics[width=.25\linewidth]{figfoldfocs.eps}
\caption{Configurations of $\omega_k$,
folded saddle, folded node, folded focus.}
\label{fig:folded}
\end{figure}

\subsection{Geometric foliations near swallowtail}
Here we consider the following three
$2$-tensors.
\begin{equation}\label{eq:bdes}
\begin{array}{rl}
\omega_{lc}
=&
(F N - G M)\,du^2+
(E N - G L)\,dudv+
(E N - F L)\,dv^2,\\
\omega_{as}
=&
L\,du^2+
2M\,dudv+
N\,dv^2,\\
\omega_{ch}
=&
(L (G L - E N) + 2 M (EM-F L))\,du^2\\
&\hspace{10mm}
+
2(M (G L + E N) - 2 FLN)\,dudv\\
&\hspace{20mm}
+(N (EN-G L)+2 M (G M - F N) )\,dv^2.
\end{array}
\end{equation}
The configuration of the solutions of $\omega_{lc}$ is called the
{\it lines of curvature},
and that of $\omega_{as}$ is called the
{\it asymptotic curves}.
Since 
$\omega_{ch}=0$ 
can be deformed to
$$
\begin{array}{cl}
&(NH-GK)\,dv^2+2(MH-FK)\,dudv+(LH-EK)\,du^2=0\\[2mm]
\Leftrightarrow&
\dfrac{N\,dv^2+2M\,dudv+L\,du^2}
{G\,dv^2+2F\,dudv+E\,du^2}=\dfrac{K}{H}
\left(=
\dfrac{2}{\kappa_1^{-1}+\kappa_2^{-1}}\right),
\end{array}
$$
along the solution curves of $\omega_{ch}$,
its normal curvature is equal to the harmonic mean
of principal curvatures,
where $K,H$ are the Gaussian and mean curvatures
respectively.
The discriminant of $\omega_{ch}=0$ is a positive multiplication
of $K$.
Thus the solutions of $\omega_{ch}=0$ lie
in the region of positive Gaussian curvature.
The the solutions of $\omega_{ch}=0$ is
called {\it characteristic curves} (see \cite{e,gs}).
We consider three foliations of \eqref{eq:bdes}
near swallowtail.
Configurations of these foliations near
singular points are intensively studied.
See \cite{dara,d,ggs,gs,ttohoku,t}, for example.
Let $\nu_2$ be a normal vector to $f$ where we do not
assume $|\nu_2|=1$,
and set 
$L_2=\inner{f_{uu}}{\nu_2}$, 
$M_2=\inner{f_{uv}}{\nu_2}$, 
$N_2=\inner{f_{vv}}{\nu_2}$.
One can easily see that
all BDEs of
\eqref{eq:bdes}
are equivalent to
that of changing $L,M,N$ to $L_2,M_2,N_2$.

We take the coordinate system as 
in subsection \ref{sec:singuaxis}.
Then the singular set is $\{v=0\}$ and
the null vector field on the $u$-axis
is $\partial_u+u\partial_v$.
Thus $F_v(u,0)=0$ for any $u$.
Hence there exists a vector valued function
$\phi$ such that $F_u(u,v)+uF_v(u,v)=v\phi(u,v)$.
We set
$$
\tilde E_2=\inner{\phi}{\phi},\ 
\tilde F_2=\inner{\phi}{f_v},\ 
\tilde G_2=\inner{f_v}{f_v},$$
and
$$
\tilde L_2=-\inner{\phi}{(\nu_2)_u},\ 
\tilde M_2=-\inner{\phi}{(\nu_2)_v},\ 
\tilde N_2=-\inner{f_v}{(\nu_2)_v}.
$$
Then
\begin{align}
\omega_{lc}
=&
v\Bigg(
\Big(
 \tilde F \tilde N_2
-\tilde G \tilde M_2
\Big)\,du^2
+
\Big(
-\tilde G \tilde L_2
+\big(\tilde G \tilde M_2
-2 \tilde F \tilde N_2\big) u
+\tilde E \tilde N_2 v
\Big)\,
dudv\nonumber\\
&\hspace{5mm}
+
\Big(
 \tilde G \tilde L_2 u
-\tilde F \tilde L_2 v
+\tilde F \tilde N_2 u^2
+\big(-\tilde F \tilde M_2
-\tilde E \tilde N_2\big) u v
+\tilde E \tilde M_2 v^2
\Big)\,
dv^2\Bigg),
\end{align}
\begin{align}
\omega_{as}
=&
\Big(
 \tilde L_2v
+\tilde N_2 u^2
-\tilde M_2 uv
\Big)\,
du^2+
2\Big(-\tilde N_2 u+\tilde M_2 v\Big)\,
dudv
+
\tilde N_2\,
dv^2,
\end{align}
\begin{align}
\omega_{ch}
=&
v\Bigg(
\Big(
 \tilde G \tilde L_2^2 v
-\tilde G \tilde L_2 \tilde N_2 u^2
+4 \tilde F \tilde L_2 \tilde N_2 u v
-\big(2 \tilde F \tilde L_2 \tilde M_2
+\tilde E \tilde L_2 \tilde N_2\big) v^2
\nonumber\\
&\hspace{15mm}
-\tilde G \tilde M_2 \tilde N_2 u^3
+\big(\tilde G \tilde M_2^2
+2 \tilde F \tilde M_2 \tilde N_2
+\tilde E \tilde N_2^2\big) u^2 v
\nonumber\\
&\hspace{25mm}
-\big(2 \tilde F \tilde M_2^2
+3 \tilde E \tilde M_2 \tilde N_2\big) u v^2
+2 \tilde E \tilde M_2^2 v^3
\Big)\,du^2\nonumber\\
&\hspace{5mm}
+
2\Big(
 \tilde G \tilde L_2 \tilde N_2 u
+\big(\tilde G \tilde L_2 \tilde M_2
-2 \tilde F \tilde L_2 \tilde N_2 \big)v\nonumber\\
&\hspace{15mm}
+\tilde G \tilde M_2 \tilde N_2 u^2
-\big(\tilde G \tilde M_2^2
+\tilde E \tilde N_2^2\big) u v
+\tilde E \tilde M_2 \tilde N_2 v^2
\Big)
\,dudv\nonumber\\
&
+
\Big(
-\tilde G \tilde L_2 \tilde N_2
-\tilde G \tilde M_2 \tilde N_2 u
+\big(2 \tilde G \tilde M_2^2 
-2 \tilde F \tilde M_2 \tilde N_2 
+\tilde E \tilde N_2^2 \big)v
\Big)\,
dv^2\Bigg).
\end{align}
We factor out it from $\omega_{lc}$ and $\omega_{ch}$
and set
$$
\tilde\omega_{lc}=\omega_{lc}/v,\quad
\tilde\omega_{as}=\omega_{as},\quad
\tilde\omega_{ch}=\omega_{ch}/v.
$$
We consider the solutions of 
$\tilde\omega_{lc}$, $\tilde\omega_{as}$ and
$\tilde\omega_{ch}$
instead of 
$\omega_{lc}$, $\omega_{as}$ and $\omega_{ch}$.
Let us consider $\tilde \omega_{lc}=0$.
Then the discriminant $\delta$ of $\tilde \omega_{lc}=0$
satisfies $\delta(0)>0$.
It is known that
such BDE is equivalent to $dx^2-dy^2=0$,
and its configuration is a pair of
transverse smooth foliations.
Existence of lines of curvature coordinate system
near swallowtail is shown by \cite{mu}.
Let us consider $\tilde \omega_{as}=0$ and
$\tilde \omega_{ch}=0$.
We set $\tilde \omega_{as}=
p_{as}\,du^2+2q_{as}\,dudv+r_{as}\,dv^2$ and
$\tilde \omega_{ch}=
p_{ch}\,du^2+2q_{ch}\,dudv+r_{ch}\,dv^2$.
We assume that $\kappa_\nu(0)=-2 b_{12} + b_{20}\ne0$.
Then $r_{as}$ and $r_{ch}$ does not vanish at $0$.
Thus 
$\tilde \omega_{as}$ (respectively, $\tilde \omega_{ch}$)
is equivalent to
$$
\bar\omega_{as}=\dfrac{p_{as}}{r_{as}}\,du^2
+2\dfrac{q_{as}}{r_{as}}\,dudv+dv^2\quad
\Big(
\text{respectively, }
\bar \omega_{ch}=\dfrac{p_{ch}}{r_{ch}}\,du^2
+2\dfrac{q_{ch}}{r_{ch}}\,dudv+dv^2
\Big).
$$
We see that
$$
\dfrac{p_{as}}{r_{as}}
=
-\dfrac{b_{04}}{2 b_{12}-b_{20}}v+u^2
+*u v+*v^2+O(3),\quad
\dfrac{q_{as}}{r_{as}}
=
-u+*v+O(2)
$$
and
$$
\dfrac{p_{as}}{r_{as}}
=
\dfrac{b_{04}}{2 b_{12}-b_{20}}v+u^2
+*u v+*v^2+O(3),\quad
\dfrac{q_{as}}{r_{as}}
=
-u+*v+O(2).
$$
By \eqref{eq:invkeisan}, we have 
$$
A(\tilde\omega_{as})
=
-\dfrac{b_{04}}{2 b_{12}-b_{20}}
=
-\dfrac{\mu_c\tau_s}{4\kappa_\nu}
\quad
\left(
\text{respectively, }
A(\tilde\omega_{ch})
=
\dfrac{b_{04}}{2 b_{12}-b_{20}}
=
\dfrac{\mu_c\tau_s}{4\kappa_\nu}
\right).
$$
Thus we know the configuration of the solutions
of $\tilde\omega_{as}$ is
folded saddle if 
$l=-\mu_c\tau_s/(4\kappa_\nu)$ $<0$,
folded node if 
$0<l<1/8$, and
folded focus if 
$1/8<l$.
The same holds for $\tilde\omega_{ch}$ by
setting
$l=-\mu_c\tau_s/(4\kappa_\nu)<0$.
\begin{figure}[!ht]
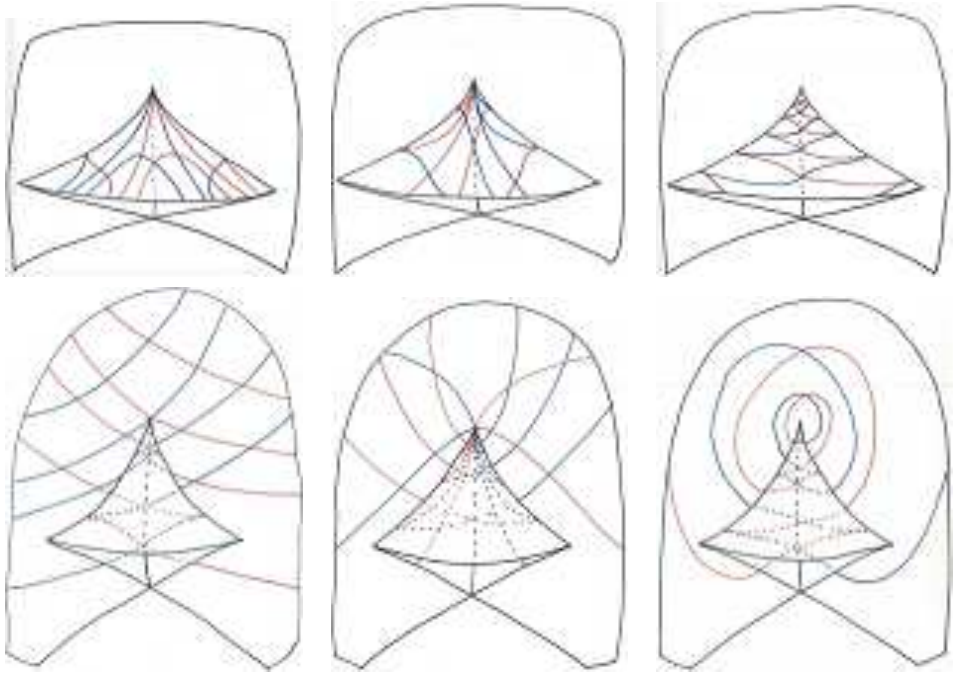

\centering
\begin{tabular}{ccccc}
\includegraphics[width=.25\linewidth]{figswim01s1.eps}
\hspace{1mm}
\includegraphics[width=.25\linewidth]{figswim02n1.eps}
\hspace{1mm}
\includegraphics[width=.25\linewidth]{figswim03f1.eps}\\
\includegraphics[width=.25\linewidth]{figswim01s2.eps}
\hspace{1mm}
\includegraphics[width=.25\linewidth]{figswim02n2.eps}
\hspace{1mm}
\includegraphics[width=.25\linewidth]{figswim03f2.eps}
\end{tabular}
\caption{Geometric foliations on swallowtail}
\label{fig:folsw}
\end{figure}
We can draw models of $\tilde\omega_{as}$ and $\tilde\omega_{ch}$
at swallowtails as in Figure \ref{fig:folsw}.
Since swallowtails appear as points on fronts,
we would like to say that
the generic configuration of $\tilde\omega_{as}$ and
$\tilde\omega_{ch}$ are the these types.
We note that since
$\tilde\omega_{as}|_{v=0}
=\tilde{N}_2(u^2\,du^2-2u\,dudv+dv^2)$,
$\tilde\omega_{ch}|_{v=0}
=-(\tilde{N}\tilde{L}_2\tilde{N}_2(u^2\,du^2-2u\,dudv+dv^2)$,
the direction of $\tilde\omega_{as}$ and $\tilde\omega_{ch}$
are not the direction of the null vector field,
the solutions of them on $S(f)$ forms
$3/2$-cusps on the image of $f$.

\begin{flushright}
\begin{tabular}{l}
Department of Mathematics, \\
Kobe University,
Rokko 1-1, Nada, \\
Kobe 657-8501, Japan\\
{\tt sajiO\!\!\!amath.kobe-u.ac.jp}\\
\end{tabular}
\end{flushright}

\end{document}